\documentclass[12pt, reqno]{amsart}
\usepackage{amssymb}
\usepackage{amsfonts}
\usepackage{amsmath}
\usepackage{color}
\usepackage{hyperref}

\newtheorem{theorem}{Theorem}[section]

\newtheorem{corollary}[theorem]{Corollary}


\def\N{\mathbb{N}}
\def\Z{\mathbb{Z}}

\def\P{\mathcal{P}}
\def\B{\mathcal{B}}
\def\D{\mathcal{D}}
\def\K{\mathcal{K}}
\def\T{\mathcal{T}}

\def\Aut{\mathop{\mathrm{Aut}}}

\begin{document}

\title{Some new designs with prescribed automorphism groups}

\author{Vedran Kr\v{c}adinac}

\email{vedran.krcadinac@math.hr}

\address{Department of Mathematics, Faculty of Science, University of Zagreb,
Bijeni\v{c}ka~30, HR-10000 Zagreb, Croatia}

\thanks{The author is partially supported by the Croatian Science Foundation under
project 1637.}

\subjclass[2000]{05B05}

\keywords{Combinatorial design; Kramer-Mesner method}

\date{August 27, 2017}

\begin{abstract}
We establish the existence of simple designs with parameters
$2$-$(55,10,4)$, $3$-$(20,5,4)$, $3$-$(21,7,30)$, $4$-$(15,5,2)$,
$4$-$(16,8,45)$, $5$-$(16,7,10)$, and $5$-$(17,8,40)$, which have
previously been unknown. For the corresponding $t$, $v$, and $k$, we
study the set of all $\lambda$ for which simple $t$-$(v,k,\lambda)$
designs exist.
\end{abstract}

\maketitle

\section{Introduction}

A $t$-$(v,k,\lambda)$ \emph{design} $\D$ is a $v$-element set $\P$
of \emph{points} together with a collection $\B$ of $k$-element
subsets called \emph{blocks}, such that every $t$-element subset of
points is contained in exactly $\lambda$ blocks. The design is
\emph{simple} if $\B$ is a set, i.e.\ contains no repeated blocks.
We refer to the Handbook of Combinatorial Designs~\cite{CD07} for
definitions and results about designs. In this paper we are
concerned with the existence problem for simple designs with small
parameters.

An \emph{automorphism} of $\D$ is a permutation of points leaving
$\B$ invariant. The set of all automorphisms forms a group under
composition, the \emph{full automorphism group} $\Aut(\D)$. By
prescribing suitable subgroups $G\le \Aut(\D)$, we are able to
construct simple designs with parameters $2$-$(55,10,4)$,
$3$-$(20,5,4)$, $3$-$(21,7,30)$, $4$-$(15,5,2)$, $4$-$(16,8,45)$,
$5$-$(16,7,10)$, and $5$-$(17,8,40)$. These designs are designated
as unknown in \cite[Table~1.35]{MR07} and \cite[Table~4.46]{KL07}.
Since the Handbook was published, new existence results about
designs with small parameters have appeared in \cite{AH10},
\cite{AHTW10}, \cite{OP08}, \cite{SST14}, and \cite{TT17}. To the
best of our knowledge, existence of the constructed designs has
previously been open.

The layout of our paper is as follows. In the next section we
describe the construction method, the used computational tools, and
some other preliminary matters. Sections \ref{secresfirst} to
\ref{secreslast} are dedicated to designs with particular $t$, $v$,
and $k$. We present our new constructions and try to determine the
set of all $\lambda$ such that simple $t$-$(v,k,\lambda)$ designs
exist. This is accomplished for $4$-$(15,5,\lambda)$ designs in
Section~\ref{sec4-15-5}, and in the other sections up to three open
cases remain. The prescribed groups~$G$ and some computational
details are laid out in the proofs of the theorems. Designs are
presented by listing base blocks; $\B$ is the union of the
corresponding $G$-orbits. The groups and the base blocks for the new
designs are also available on our web page:
\begin{center}{\small
\url{https://web.math.pmf.unizg.hr/~krcko/results/newdesigns.html}
}\end{center}

\section{Preliminaries}

Let $G$ be a group of permutations of $\P=\{1,\ldots,v\}$, and let
$\T_1,\ldots,\T_m$ and $\K_1,\ldots,\K_n$ be the orbits of
$t$-element subsets and $k$-element subsets of $\P$, respectively.
Let $a_{ij}$ be the number of subsets $K\in\K_j$ containing a given
$T\in \T_i$. This number does not depend on the choice of $T$. The
matrix $A=[a_{ij}]$ is the \emph{Kramer-Mesner matrix}. It is well
known that simple $t$-$(v,k,\lambda)$ designs with~$G$ as an
automorphism group exist if and only if the system of linear
equations $A\cdot x=\lambda j$ has $0$-$1$ solutions
$x\in\{0,1\}^n$. Here, $j=(1,\ldots,1)^\tau$ is the all-one vector
of length $m$. This method of construction was pioneered by
E.~S.~Kramer and D.~M.~Mesner~\cite{KM76} and has since been used to
find many new designs with prescribed automorphism groups (see,
e.g.,\ \cite{AHTW10}, \cite{BLW99}, \cite{KNP11}, \cite{KV16},
\cite{KR90}, and \cite{AW98}).

We use GAP~\cite{GAP4} to compute the orbits $\T_i$, $\K_j$ and the
matrix~$A$. For most of our results this is a small and easy
computation. Exceptions are Theorems~\ref{tm1} and~\ref{tm2}, where
we use an algorithm described in~\cite{KV16} to produce short
orbits, and a program written in C for long orbits of the group
$G_3\cong M_{11}$. The second step of the computation is finding
solutions of the Kramer-Mesner system $A\cdot x=\lambda j$. Solving
systems of linear equations over $\{0,1\}$ is a known NP complete
problem. Our prescribed automorphism groups $G$ lead to systems of
small to moderate size, that can be solved fairly quickly. We use
the backtracking solver developed in~\cite{KNP11} for $2$-designs
and A.~Wasserman's solver~\cite{AW98} based on the LLL algorithm for
designs with $t\ge 3$. Running times varied from a few seconds to
several days of CPU time. Finally, to decide whether the constructed
designs are isomorphic and to compute their full automorphism
groups, we use \emph{nauty}/\emph{Traces} by B.~D.~McKay and
A.~Piperno~\cite{MP14}.

A $t$-$(v,k,\lambda)$ design $\D$ is also a $s$-$(v,k,\lambda_s)$
design, for $\lambda_s=\lambda {v-s \choose t-s} / {k-s\choose
t-s}$, $0\le s\le t$. The number of blocks of $\D$ is
$b=|\B|=\lambda_0$. The parameters $t$-$(v,k,\lambda)$ are called
\emph{admissible} if all the $\lambda_s$ are integers, and
\emph{realizable} if simple designs with these parameters exists.
Given $t$, $v$, and $k$, there is a least integer $\lambda_{\min}$
such that $t$-$(v,k,\lambda_{\min})$ are admissible. Any $\lambda$
for which $t$-$(v,k,\lambda)$ are admissible is of the form
$\lambda=m \lambda_{\min}$, for $m\in\N$. The largest $\lambda$ for
which a simple $t$-$(v,k,\lambda)$ design exists is
$\lambda_{\max}={v-t \choose k-t}$. The corresponding
\emph{complete} $t$-$(v,k,\lambda_{\max})$ designs contains all
$k$-element subsets as blocks: $\B={\P \choose k}$.

For the parameters of our newly constructed designs, we try to
determine all $\lambda$ between $\lambda_{\min}$ and
$\lambda_{\max}$ such that $t$-$(v,k,\lambda)$ are realizable. The
\emph{supplement} of a $t$-$(v,k,\lambda)$ design $\D=(\P,\B)$ is
the design $\overline{\D} = (\P,{\P \choose k}\setminus \B)$ with
parameters $t$-$(v,k,\lambda_{\max}-\lambda)$. Therefore it suffices
to consider existence of $t$-$(v,k,\lambda)$ designs for $\lambda\le
\lambda_{\max}/2$. We shall denote the largest integer $m$ such that
$m\lambda_{\min}\le \lambda_{\max}/2$ by $M$. The \emph{complement}
of $\D$, obtained by taking the complement of each block in~$\P$, is
also a $t$-design and therefore it suffices to consider parameters
with $k\le v/2$.

\section{Designs with parameters $2$-$(55,10,\lambda)$}\label{secresfirst}

Let $t=2$, $v=55$, and $k=10$. Then $\lambda_{\min}=1$,
$\lambda_{\max} = {53\choose 8}=886322710$, and $M=443161355$. A
$2$-$(55,10,1)$ design would have $b=33$ blocks and cannot exist by
Fisher's inequality \cite[Theorem 1.9]{MR07}. Designs
$2$-$(55,10,2)$ are quasi-residuals of symmetric $2$-$(67,12,2)$
designs, which do not exist by the Bruck-Ryser-Chowla theorem
\cite[Theorem II.6.11]{CD07}. By the Hall-Connor theorem
\cite[Theorem II.6.27]{CD07}, quasi-residual designs with
$\lambda=2$ are actually residual, hence $2$-$(55,10,2)$ designs
also do not exist. According to \cite[Table~1.35]{MR07},
$2$-$(55,10,m)$ designs exist for $m=5$, and are unknown for
$m\in\{3,4,6\}$.

\begin{theorem}\label{tm1}
Simple $2$-$(55,10,4)$ designs exist. 
\end{theorem}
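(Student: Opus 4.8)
The plan is to apply the Kramer–Mesner method with a carefully chosen subgroup $G\le S_{55}$ of moderate order, so that the resulting linear system $A\cdot x=4j$ is small enough to be solved by computer. First I would look for a permutation group $G$ acting on $\P=\{1,\ldots,55\}$ whose orbits on $2$-subsets and $10$-subsets are few in number but not so few that the system $A\cdot x=4j$ becomes infeasible; since $55=5\cdot 11$, natural candidates are groups built from a Frobenius group of order $55$, or a group with an orbit structure reflecting the factorization $55=5\cdot 11$, possibly involving a normalizer in which the cyclic group $C_{11}$ or $C_5$ acts. Given that the excerpt announces the use of the short-orbit algorithm of~\cite{KV16} together with a dedicated C program for long orbits of a copy $G_3\cong M_{11}$, I would in fact try $G$ containing $M_{11}$ acting on $55$ points — the action of $M_{11}$ on the $55$ pairs from its natural $11$-point action, or its action on the $55$ cosets of a subgroup of index $55$ — since $M_{11}$ has several transitive actions of small degree and its $2$-transitivity on $11$ points makes the orbit counts on $t$-subsets manageable.

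The key steps, in order, are: (1) fix the group $G$ and use GAP~\cite{GAP4} to compute the orbits $\T_1,\ldots,\T_m$ on $2$-subsets and $\K_1,\ldots,\K_n$ on $10$-subsets, using the algorithm of~\cite{KV16} for the short orbits and the purpose-built C program for the long orbits of the $M_{11}$-part; (2) form the Kramer–Mesner matrix $A=[a_{ij}]$, where $a_{ij}$ counts the blocks in $\K_j$ through a fixed pair in $\T_i$; (3) feed the system $A\cdot x=4j$, $x\in\{0,1\}^n$, to A.~Wasserman's LLL-based solver~\cite{AW98} (or, since $t=2$, the backtracking solver of~\cite{KNP11}) and run it until a $0$-$1$ solution is found; (4) reconstruct the design by taking $\B$ to be the union of the $G$-orbits $\K_j$ selected by the solution vector, and verify directly that every $2$-subset of points lies in exactly $4$ blocks and that no block is repeated; (5) as a final check, run \emph{nauty}/\emph{Traces}~\cite{MP14} on the incidence structure to confirm it is a genuine simple $2$-$(55,10,4)$ design and to record its full automorphism group.

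The main obstacle is step (3): solving $A\cdot x=4j$ over $\{0,1\}$ is NP-complete in general, and with $v=55$ and $k=10$ the number $n$ of $10$-subset orbits can be large unless $|G|$ is chosen generously, so there is a genuine tension between keeping the system small (large $|G|$, hence few orbits, but possibly no solution) and keeping it solvable-in-principle (smaller $|G|$, more orbits, longer search). In practice this is resolved by experimenting with a short list of candidate groups and accepting that the decisive run may consume a substantial amount of CPU time — the excerpt indeed flags Theorem~\ref{tm1} as one of the computationally heavy cases, with running times up to several days. Steps (1)–(2) are only an obstacle insofar as the long $M_{11}$-orbits on $10$-subsets are too numerous to enumerate with stock software, which is exactly why the specialized C code is invoked; once $A$ is in hand, steps (4)–(5) are routine verifications.
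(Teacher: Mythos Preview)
Your general plan—prescribe a group, build the Kramer–Mesner matrix, solve $Ax=4j$ over $\{0,1\}$—is right, but the specifics go astray in two places.

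First, you miss the simplification that makes this case small rather than large. A $2$-$(55,10,4)$ design has only $b=132$ blocks, so any $G$-orbit of $10$-subsets appearing in $\mathcal{B}$ must have length at most $132$. Long orbits are therefore irrelevant, and your discussion of enumerating them with the dedicated C program is misplaced: that machinery belongs entirely to Theorem~\ref{tm2}, where $b$ runs into the hundreds of millions. For Theorem~\ref{tm1} the paper prescribes $G_1\cong PSL(2,11)$ of order $660$, uses the algorithm of~\cite{KV16} to list only the $97$ orbits of $10$-subsets of length $\le 132$, and obtains a $6\times 97$ Kramer–Mesner system with five solutions for $\lambda=4$ (three designs up to isomorphism, with full automorphism groups $PSL(2,11)$ and $PGL(2,11)$). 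So the case you flag as ``computationally heavy'' is in fact tiny once the orbit-length bound is applied.

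Second, committing to $M_{11}$ is risky rather than merely different. With $|M_{11}|=7920$ you would need $10$-subset orbits whose lengths divide $7920$ and sum to exactly $132$, and even if such orbits exist there is no guarantee the resulting system has a $\{0,1\}$-solution; the paper deploys $G_3\cong M_{11}$ only for $m\ge 59$. More telling, the three designs the paper actually finds all have full automorphism group of order $660$ or $1320$, so none of them admits $M_{11}$. Your outline would become a correct proof if you replaced $M_{11}$ by $PSL(2,11)$ and invoked the bound on orbit length coming from $b=132$; as written, the group choice is unsupported and the long-orbit apparatus is beside the point.
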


\begin{proof}
Let $G_1\cong PSL(2,11)$ be the group of order $660$ generated by
the permutations
\begin{equation}\label{cycle11}
(1,2,\ldots,11)(12,\ldots,22)(23,\ldots,33)(34,\ldots,44)(45,\ldots,55)
\end{equation}
and
\begin{equation*}
\begin{array}{l}
(2,27)(3,37)(4,29)(5,47)(6,53)(7,30)(8,38)(9,32)(11,43)(13,52)\\
(15,48)(17,28)(18,42)(19,49)(20,51)(21,44)(22,31)(23,46)(24,50)\\
(25,54)(26,33)(34,45)(36,39)(41,55).
\end{array}
\end{equation*}
The group $G_1$ has $6$ orbits on $2$-element subsets of
$\P=\{1,\ldots,55\}$. It suffices to consider orbits of $10$-element
subsets whose size does not exceed the number of blocks $b=132$.
This can be accomplished efficiently by an algorithm described
in~\cite{KV16}; there are $97$ such orbits. The $6\times 97$
Kramer-Mesner system has $5$ solutions for $\lambda=4$, giving rise
to three non-isomorphic designs. Two of them have $G_1$ as their
full automorphism group. They are generated by the base blocks $\{1,
2, 5, 8, 10, 12, 23, 42, 45, 50\}$ and $\{1, 2, 3, 9, 18, 27, 28,
33, 38, 43\}$, respectively. The third design has $\Aut(\D)\cong
G_1.\Z_2\cong PGL(2,11)$ and is generated by the two base blocks $\{
1, 2, 4, 5, 16, 18, 24$, $30, 51, 55\}$ and $\{ 1, 2, 6, 8, 24, 27,
38, 40, 50, 53\}$.
\end{proof}

Designs $2$-$(55,10,5)$ can be constructed from the group $G_2\cong
\Z_{55}.\Z_{10}$. There are five non-isomorphic designs with $G_2$
as their full automorphism group.

\begin{theorem}\label{tm2}
Simple $2$-$(55,10,m)$ designs exist for $m\in\{4,5,8,\ldots,M\}$.
\end{theorem}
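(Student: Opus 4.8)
The plan is to derive the full range from the two constructions already in hand---the $2$-$(55,10,4)$ design of Theorem~\ref{tm1} and the $2$-$(55,10,5)$ designs built from $G_2$---together with one extra direct construction, using the elementary fact that the union of two block-disjoint simple $2$-$(55,10,m_1)$ and $2$-$(55,10,m_2)$ designs on $\P$ is a simple $2$-$(55,10,m_1+m_2)$ design. Since a simple $2$-$(55,10,m)$ design has $33m$ blocks and $\binom{55}{10}=33\lambda_{\max}$ with $M=\lambda_{\max}/2$, there is in principle room to glue pieces all the way up to $m=M$; the task is to produce enough pairwise block-disjoint $\lambda=4$ and $\lambda=5$ pieces, plus a single $\lambda=11$ piece to cover the one value not in the numerical semigroup $\langle 4,5\rangle$.

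To manufacture disjoint pieces I would fix the prescribed group $G_3\cong M_{11}$ acting on $\P=\{1,\dots,55\}$ as the pairs of an $11$-element set, compute its (few) orbits $\T_1,\dots,\T_m$ on $2$-subsets, and compute its orbits $\K_1,\dots,\K_n$ on $10$-subsets; the latter are numerous, so enumerating them is where the dedicated C~program for long orbits is needed. A union of $\K_j$'s is a $2$-$(55,10,\lambda)$ design precisely when its indicator vector solves $A\cdot x=\lambda j$, and two designs assembled from disjoint sets of orbits are automatically block-disjoint. The Kramer--Mesner system here has only a handful of rows but millions of columns, hence is wildly underdetermined, and I would solve it incrementally: repeatedly peel off from the orbit set a subfamily whose union is a $2$-$(55,10,4)$ design (with a few $\lambda=5$ peels and one $\lambda=11$ peel mixed in), continuing until the remaining orbits no longer support such a solution. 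Taking prefix unions of the peeled pieces then realizes $\lambda=4a+5b$ for every value of this form up to the total reached.

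Because $\langle 4,5\rangle$ contains every integer $\ge 8$ except its Frobenius number $4\cdot 5-4-5=11$, the peeling yields simple $2$-$(55,10,m)$ designs for all $m\in\{8,9,10\}\cup\{12,13,\ldots\}$ below the attained total, while the value $m=11$ is supplied by its own peel. Granting (see below) that the peeling can be pushed until the blocks consumed approach $\binom{55}{10}$, it comfortably passes $m=M$, where exactly half of the $10$-subsets are used; combined with the direct realizations of $m=4$ and $m=5$, this gives the whole list $\{4,5,8,\ldots,M\}$. (Alternatively, any $m>\lambda_{\max}/2$ could be obtained by supplementing a small realizable value, but the statement only requires $m\le M$.)

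Two places carry the real work. The isolated value $m=11$ genuinely lies outside $\langle 4,5\rangle$ and so needs its own $M_{11}$-invariant solution, i.e.\ explicit base blocks. More essentially, one must justify that the greedy peeling does not stall prematurely---that one can keep extracting $\lambda\in\{4,5\}$ orbit-designs until at least $M$ designs' worth of blocks have been removed---rather than getting stuck with a leftover family of orbits admitting no further such solution. With only a few defining equations this is intuitively unsurprising and is confirmed by the computation, but it is exactly the step on which the ``all $\lambda$ between $\lambda_{\min}$ and $M$'' conclusion rests.
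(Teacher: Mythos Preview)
Your plan diverges from the paper's and contains a genuine gap at its load-bearing step. You want to peel off, under $G_3\cong M_{11}$, a long sequence of pairwise block-disjoint $G_3$-invariant designs with $\lambda\in\{4,5\}$ (plus one with $\lambda=11$). But a $G_3$-invariant $2$-$(55,10,4)$ design has only $132$ blocks and must be a union of $M_{11}$-orbits on $10$-sets; there is no evidence that such designs exist at all, let alone that roughly $10^8$ pairwise disjoint copies can be extracted. Indeed, the paper's own computations point the other way: it uses the smaller group $G_1\cong PSL(2,11)$ to handle small values $m\in\{8,\ldots,58\}$ by direct Kramer--Mesner searches, and only invokes $G_3$ for $m\ge 59$. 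Since $PSL(2,11)\le M_{11}$, any $M_{11}$-invariant $\lambda=4$ design would have been among the five $G_1$-solutions found in Theorem~\ref{tm1}, whose full automorphism groups are $G_1$ or $PGL(2,11)$, not $M_{11}$; so the $\lambda=4$ peels you need simply do not exist. Your acknowledged worry that the peeling ``does not stall prematurely'' is thus not a minor computational check---it fails at the very first peel.

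The paper's actual strategy avoids this by using \emph{large}-$\lambda$ building blocks under $G_3$. From the short $M_{11}$-orbits it produces, by Kramer--Mesner, one design for each $m\in\{59,\ldots,479\}$; then it observes that among the orbits of size $3960$ there are $1647$ that are individually $2$-$(55,10,120)$ designs, and among the long orbits of size $7920$ there are $555578$ that are $2$-$(55,10,240)$ designs and $1256273$ disjoint pairs forming $2$-$(55,10,480)$ designs. These one- and two-orbit designs are automatically pairwise disjoint and disjoint from the short-orbit designs, so unions give every $m$ up to $M$. The key idea you are missing is that single $M_{11}$-orbits (or pairs) already yield designs with $\lambda\in\{120,240,480\}$, furnishing millions of ready-made disjoint pieces; trying to assemble tiny $\lambda\in\{4,5\}$ pieces under a group of order $7920$ is working against the grain of the method.
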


\begin{proof}
For $m\in\{8,\ldots,58\}$, designs can be constructed from the group
$G_1$. For example, $2$-$(55,10,8)$ designs are obtained as unions
of any two of the designs from Theorem~\ref{tm1}, since they are
disjoint. By the Kramer-Mesner method we found designs for all $m$
in the specified range. Base blocks are available on the web page
referred to in the Introduction.


For $m\ge 59$, we use the group $G_3\cong M_{11}$ of order $7920$
generated by the permutations~\eqref{cycle11} and
\begin{equation*}
\begin{array}{l}
(3,23)(4,20)(5,10)(6,45)(7,34)(8,33)(11,12)(13,26)(14,15)(16,53)\\
(17,43)(18,28)(19,52)(21,49)(22,47)(24,46)(25,32)(27,51)(29,42)\\
(31,41)(36,39)(38,48)(40,54)(44,50).
\end{array}
\end{equation*}
Orbits of size less than $|G_3|$ were computed by the algorithm
from~\cite{KV16}.
There are $367$ orbits of size less than $3960$. Using these orbits
and the Kramer-Mesner method, we found designs for
$m \in\{59,\ldots,479\}$. Base blocks are available on our web page.
There are $13753$ orbits of size $3960$, of which $1647$ form the
blocks of $2$-$(55,10,120)$ designs. These $1647$ designs are
mutually disjoint and disjoint from the previously constructed
designs. Finally, we used a program written in~C to find the
$3686048$ long orbits of size $7920$. Among them, $555578$ orbits
form disjoint $2$-$(55,10,240)$ designs, and $1256273$ pairs of long
orbits can be combined into as many disjoint $2$-$(55,10,480)$
designs. It is clear that by taking unions of the so far constructed
designs, simple $2$-$(55,10,m)$ design with $G_3$ as an automorphism
group can be constructed for any $m\in \{59,\ldots,M\}$.
\end{proof}

Recently, D.~Crnkovi\'{c} and A.~\v{S}vob~\cite{CS17} also found
$2$-$(55,10,m)$ designs for $m\in\{4,10\}$. Thus, the only remaining
open cases are $m\in\{3,6,7\}$.
We tried to construct these designs using various prescribed
automorphism groups, but did not find any examples.

\section{Designs with parameters $3$-$(20,5,\lambda)$}

For $t=3$, $v=20$, and $k=5$, we have $\lambda_{\min}=2$,
$\lambda_{\max}={17\choose 2}=136$, and $M=34$. According to
\cite[Table~4.46]{KL07}, $3$-$(20,5,2m)$ designs exist for
$m\in\{3,\ldots,6,8,\ldots,34\}$. We found designs for two of the
three missing values of $m$.

\begin{theorem}
Simple $3$-$(20,5,2m)$ designs exist for $m\in\{2,7\}$. 
\end{theorem}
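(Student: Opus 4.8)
The plan is to follow the same Kramer-Mesner strategy used for Theorems~\ref{tm1} and~\ref{tm2}, adapted to the parameters $t=3$, $v=20$, $k=5$. We need to exhibit simple $3$-$(20,5,4)$ and $3$-$(20,5,14)$ designs (the cases $m=2$ and $m=7$), since designs for $m\in\{3,\ldots,6,8,\ldots,34\}$ are already known. Here $b=\lambda_0 = 2m\binom{20}{3}/\binom{5}{3} = 2m\cdot 114$, so for $m=2$ we have $b=456$ blocks and for $m=7$ we have $b=1596$ blocks. First I would search for a permutation group $G$ on $\P=\{1,\ldots,20\}$ of modest order whose orbit structure on $3$-subsets and $5$-subsets is small enough to make the integer linear system tractable, yet whose orbits on $5$-subsets divide suitably into the required block counts. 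Natural candidates are subgroups related to $PSL(2,19)$, $PGL(2,19)$, or smaller groups such as a Frobenius group $\Z_{19}.\Z_k$ or a group built from $A_5$, $S_5$, or $\Z_2\times A_5$ acting on $20$ points.

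Second, having fixed $G$, I would use GAP to compute the orbits $\T_1,\ldots,\T_m$ on $3$-subsets and $\K_1,\ldots,\K_n$ on $5$-subsets, and assemble the Kramer-Mesner matrix $A=[a_{ij}]$. Since the full automorphism group need not be large, one can even restrict attention to orbits of $5$-subsets whose size does not exceed $b$, exactly as in the proof of Theorem~\ref{tm1}, which keeps $n$ small. Then I would feed the system $A\cdot x = \lambda j$ with $\lambda = 4$ (respectively $\lambda = 14$) to A.~Wasserman's LLL-based solver, which the excerpt tells us is the tool of choice for $t\ge 3$. If a $0$-$1$ solution is found, the corresponding union of $G$-orbits is the desired design; I would then run \emph{nauty}/\emph{Traces} to confirm it is simple (automatic from the $0$-$1$ solution) and to report $\Aut(\D)$ and a list of base blocks.

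Third, to present the result cleanly I would record for each of $m=2$ and $m=7$ a specific generating set for the prescribed group $G$ (most likely the same $G$ works for both values by taking unions of orbits, or a slightly different subgroup for each), together with the base blocks whose $G$-orbits constitute $\B$. If a single group $G$ yields a $3$-$(20,5,4)$ design, one might hope to obtain the $3$-$(20,5,14)$ design either directly from the same system with $\lambda=14$ or, when arithmetically possible, as a union of smaller designs found along the way (analogous to the ``disjoint union'' trick in Theorem~\ref{tm2}); however $14$ is not a multiple of $4$, so a direct search for $\lambda=14$ is the more likely route.

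The main obstacle I anticipate is the search for a group $G$ that simultaneously makes the linear system small and admits a solution: for $t=3$ the constraints are considerably tighter than for $2$-designs, and a group that is too large may force all orbit intersection numbers to be divisible in ways incompatible with $\lambda=4$, while a group that is too small produces a system too large for the solver to handle in reasonable time. In practice this means trying several candidate groups and several prescribed orders until both $\lambda$-values are realized; the running time for the LLL solver on the resulting systems (potentially hours to days, as the excerpt notes) is the practical bottleneck. Once a working group is identified the rest is routine verification.
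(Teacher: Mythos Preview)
Your plan is methodologically correct and matches the paper's approach: Kramer--Mesner with a prescribed group on $20$ points, solved by Wassermann's LLL solver. However, as written it is a research outline rather than a proof. For a constructive existence theorem the proof \emph{is} the explicit construction; until a specific group is fixed and base blocks are exhibited (or at least the system is reported solved), nothing has been established. The paper's proof is short precisely because it commits to concrete data.

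For comparison: the paper takes $G$ to be the dihedral group of order~$38$ generated by the $19$-cycle $(1,2,\ldots,19)$ and the involution $(2,19)(3,18)\cdots(10,11)$, fixing the point~$20$. This is the case $k=2$ of your Frobenius candidate $\Z_{19}.\Z_k$, so your instincts were right. The resulting Kramer--Mesner system is $39\times 444$, and a $0$--$1$ solution for $\lambda=4$ is given by $12$ base blocks. For $m=7$ the paper does \emph{not} do a direct search at $\lambda=14$; instead it finds a $3$-$(20,5,10)$ design (39 base blocks) from the same group that is disjoint from the $3$-$(20,5,4)$ design, and takes the union. Your remark that ``$14$ is not a multiple of $4$'' led you to discard the union route, but unions need not be of copies of the same design: $14=4+10$ works perfectly well, and $m=5$ was already known to be realizable. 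A direct search at $\lambda=14$ would also succeed here (the paper notes the same $G$ works for all $m\in\{2,\ldots,34\}$), so your fallback plan is fine; you just overlooked the shortcut the paper actually uses.
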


\begin{proof}
Let $G$ be the dihedral group of order $38$, generated by the cycle
$(1,2,\ldots,19)$ and the involution
\begin{equation*}
(2,19)(3,18)(4,17)(5,16)(6,15)(7,14)(8,13)(9,12)(10,11).
\end{equation*}
There are $39$ orbits of $3$-element subsets and $444$ orbits of
$5$-element subsets of $\P=\{1,\ldots,20\}$. One solution of the
Kramer-Mesner system for $\lambda=4$ is given by the first $12$ base
blocks in Table~\ref{tab1}. The next $39$ base blocks generate a
$3$-$(20,5,10)$ design, disjoint from the $3$-$(20,5,4)$ design. The
union of these two design is a simple $3$-$(20,5,14)$ design.
\end{proof}

\begin{table}[th]
\begin{center} {\scriptsize \begin{tabular}{lllll} \hline
$\{1, 2, 3, 5, 13\}$ & $\{1, 2, 3, 8, 10\}$ & $\{1, 2, 4, 6, 12\}$ & $\{1, 2, 4, 8, 17 \}$ & $\{1, 2, 4, 10, 20\}$\\
$\{ 1, 2, 5, 7, 15 \}$ & $\{ 1, 2, 5, 9, 16 \}$ & $\{ 1, 2, 6, 8,
20\}$ & $\{ 1, 2, 6, 9, 11 \}$ & $\{ 1, 2, 7, 11, 14 \}$\\
$\{ 1, 3, 6, 9, 15 \}$ & $\{ 1, 4, 9, 13, 20 \}$ & & & \\
\hline
$\{ 1, 2, 3, 5, 12 \}$ & $\{ 1, 2, 3, 5, 16 \}$ & $\{ 1, 2,
3, 6, 16 \}$ & $\{ 1, 2, 3, 6, 17 \}$ & $\{ 1, 2, 3, 7, 12 \}$ \\
$\{ 1, 2, 3, 7, 16 \}$ & $\{ 1, 2, 4, 5, 9 \}$ & $\{ 1, 2, 4, 9, 11
\}$ & $\{ 1, 2, 4, 9, 15 \}$ & $\{ 1, 2, 4, 9, 20 \}$ \\
$\{ 1, 2, 4, 11, 16 \}$ & $\{ 1, 2, 4, 11, 18 \}$ & $\{ 1, 2, 4, 18,
20 \}$ & $\{ 1, 2, 5, 8, 9 \}$ & $\{ 1, 2, 5, 8, 12 \}$ \\
$\{ 1, 2, 5, 8, 15 \}$ & $\{ 1, 2, 5, 8, 16 \}$ & $\{ 1, 2, 6, 14,
15 \}$ & $\{ 1, 2, 7, 12, 13 \}$ & $\{ 1, 2, 7, 12, 14 \}$ \\
$\{ 1, 2, 8, 9, 20 \}$ & $\{ 1, 2, 8, 10, 12 \}$ & $\{ 1, 2, 8, 12,
20 \}$ & $\{ 1, 2, 9, 10, 20 \}$ & $\{ 1, 2, 9, 11, 13 \}$ \\
$\{ 1, 2, 9, 13, 20 \}$ & $\{ 1, 3, 5, 9, 12 \}$ & $\{ 1, 3, 5, 11,
14 \}$ & $\{ 1, 3, 6, 8, 14 \}$ & $\{ 1, 3, 6, 8, 20 \}$ \\
$\{ 1, 3, 6, 13, 16 \}$ & $\{ 1, 3, 6, 14, 16 \}$ & $\{ 1, 3, 6, 16,
20 \}$ & $\{ 1, 3, 6, 17, 20 \}$ & $\{ 1, 3, 7, 16, 20 \}$ \\
$\{ 1, 3, 8, 11, 14 \}$ & $\{ 1, 4, 7, 11, 16 \}$ & $\{ 1, 4, 10,
14, 20 \}$ & $\{ 1, 5, 10, 14, 20 \}$\\
\hline
\end{tabular}}
\vskip 5mm \caption{Base blocks for $3$-$(20,5,\lambda)$ designs,
$\lambda\in\{4,10,14\}$.}\label{tab1}
\end{center}
\end{table}

\begin{corollary}
Simple $3$-$(20,5,2m)$ designs exist for $m\in\{2,\ldots,34\}$.
\end{corollary}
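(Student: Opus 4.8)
The corollary asserts that simple $3$-$(20,5,2m)$ designs exist for all $m\in\{2,\dots,34\}$. The plan is to assemble this from three ingredients already in hand. First, the preceding theorem supplies the two previously missing values $m=2$ and $m=7$ (constructed from the dihedral group of order $38$), and the cited Table~4.46 of \cite{KL07} records that $3$-$(20,5,2m)$ designs exist for $m\in\{3,\dots,6,8,\dots,34\}$. Together these cover every $m$ from $2$ up to $\lambda_{\max}/2$'s bound $M=34$, and in fact every admissible $m$ up to $\lambda_{\max}={17\choose 2}=136$, i.e.\ $m\le 68$, once we invoke the supplement.

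Concretely, I would argue as follows. For $m\in\{2,3,4,5,6\}$, existence follows from the theorem ($m=2$) and from \cite[Table~4.46]{KL07} ($m\in\{3,4,5,6\}$). For $m=7$, existence is the theorem again; the explicit base blocks in Table~\ref{tab1} exhibit a $3$-$(20,5,14)$ design as the union of the listed $3$-$(20,5,4)$ and $3$-$(20,5,10)$ designs, which are disjoint. For $m\in\{8,\dots,34\}$, existence is recorded in \cite[Table~4.46]{KL07}. Taking the union over these three ranges gives the claimed interval $\{2,\dots,34\}$, completing the proof.

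There is essentially no obstacle here: the corollary is a bookkeeping statement that splices the new theorem into the known spectrum, exactly the pattern used for Theorem~\ref{tm2} and the analogous corollaries in later sections. The only point requiring a moment's care is making sure the union constructions preserve simplicity, i.e.\ that the designs being combined are genuinely disjoint as block sets — but this is exactly what is asserted in the proof of the theorem for the $m=7$ case, and for the values drawn from the literature it is part of their recorded construction. Hence no new computation is needed: the corollary follows immediately by combining the theorem with the previously known results.
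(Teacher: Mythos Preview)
Your proposal is correct and matches the paper's approach: the corollary is stated without proof because it follows immediately by combining the preceding theorem (which supplies $m\in\{2,7\}$) with the previously known cases $m\in\{3,\ldots,6,8,\ldots,34\}$ from \cite[Table~4.46]{KL07}. The paper additionally remarks that the same dihedral group $G$ of order~$38$ in fact yields designs for every $m\in\{2,\ldots,34\}$, but this is an aside rather than the logical basis for the corollary.
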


In fact, designs can be constructed from the same group $G$ for all
$m\in\{2,\ldots,34\}$. The only open case is now $m=1$. We did not
find any $3$-$(20,5,2)$ designs by prescribing automorphism groups.
We examined the subgroups of $PGL(2,19)$ operating on $20$ points
and of $AGL(1,19)$ operating on $19$ points, of orders greater than
$19$, systematically.

\section{Designs with parameters $3$-$(21,7,\lambda)$}

For $t=3$, $v=21$, and $k=7$, we have $\lambda_{\min}=15$,
$\lambda_{\max}={18\choose 4}=3060$, and $M=102$. According to
\cite[Table~4.46]{KL07}, $3$-$(21,7,15m)$ designs exist for $46$
values of $m$. Existence is unknown for $56$ values of $m$, starting
with $m\in\{1, 2, 5, 7,\ldots\}$. We found designs for all but the
first of these unknown values.

\begin{theorem}
Simple $3$-$(21,7,15m)$ designs exist for $m\in\{2,\ldots,102\}$.
\end{theorem}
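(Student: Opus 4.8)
The plan is to follow the same Kramer–Mesner strategy already used for the $2$-$(55,10,\lambda)$ and $3$-$(20,5,\lambda)$ families, exploiting a single well-chosen automorphism group together with disjoint-union composition. First I would fix a prescribed group $G$ of permutations of $\P=\{1,\ldots,21\}$ of moderate order; natural candidates are subgroups of $P\Gamma L(3,4)$ acting on the $21$ points of $PG(2,4)$, or a Frobenius group such as $\Z_{21}.\Z_6$, or the dihedral/metacyclic groups of order a small multiple of $21$. The group should be large enough that the number $m$ of $G$-orbits on $3$-subsets is small (so the Kramer–Mesner system has few rows) but small enough that many orbit sizes stay well below $b/\lambda$ for the smaller target values of $\lambda$. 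With $\lambda_{\min}=15$ and $b=\lambda_{\min}\cdot\binom{v}{t}/\binom{k}{t}=15\cdot\binom{21}{3}/\binom{7}{3}=570$ for $m=1$, any useful group for the small cases has order dividing into orbit sizes below a few hundred.

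Next I would compute, in GAP, the orbits $\T_1,\ldots,\T_m$ of $3$-subsets and $\K_1,\ldots,\K_n$ of $7$-subsets of $\P$ under $G$, form the Kramer–Mesner matrix $A=[a_{ij}]$, and for each admissible multiplier $m$ in the desired range solve $A\cdot x=15m\cdot j$ over $\{0,1\}^n$ using Wasserman's LLL-based solver (since $t=3$). The key reduction is the supplement/disjoint-union trick from the Preliminaries: it suffices to realize $\lambda=15m$ for $m$ up to $M=51$ and then invoke supplements for $52\le m\le 102$ via $\overline{\D}$, which has $\lambda_{\max}-\lambda=15(204-m)\ge 15\cdot 102$. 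Within the lower half, I would look for a small set of ``building-block'' designs — for instance a $3$-$(21,7,15)$ design would settle $m=1$, but since that case is explicitly to be left open, I would instead aim at $m=2$ directly and at $m=3$, and then combine disjoint designs: if a $3$-$(21,7,30)$ design and a $3$-$(21,7,15m_0)$ design are disjoint (as $G$-orbit unions with disjoint supports among the $\K_j$), their union realizes $m=2+m_0$. Establishing that the orbit structure admits enough mutually disjoint solutions — ideally a chain $\D_2\subset\D_4\subset\cdots$ built from pairwise disjoint orbit packets — is what lets one sweep out all of $\{2,\ldots,51\}$ from finitely many computer searches, exactly as in Theorem~\ref{tm2}.

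The main obstacle I anticipate is the $m=2$ case, i.e.\ the $3$-$(21,7,30)$ design promised in the abstract. For the larger values of $m$ the feasible cone of $A\cdot x=15m\cdot j$ is comfortably high-dimensional and integer points are abundant, so the LLL solver should terminate quickly and disjoint families come for free from long orbits of full size $|G|$; but for $m=2$ the target vector is small relative to the column sums of $A$, the solution polytope may be nearly empty, and the search can be delicate — this is presumably why a more elaborate group (or the longer computation alluded to for $G_3\cong M_{11}$ in the $2$-$(55,10,\lambda)$ section) is needed. If a single group does not yield both the small and the large cases, the fallback is to use one group (say of order $\approx 60$–$120$, possibly $PSL(3,2)$ or $A_7$ acting on $21$ points) for the hard small values and a smaller cyclic-type group for the tail, then merge the ranges. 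Finally, for each constructed design I would record base blocks, verify with \emph{nauty}/\emph{Traces} that the parameters are correct and determine $\Aut(\D)$, noting in particular which design realizes the previously unknown $3$-$(21,7,30)$ parameters.
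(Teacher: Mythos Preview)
Your Kramer--Mesner plan is the right framework, but the supplement reduction contains an error that breaks the argument as written. You claim it suffices to realize $m\le 51$ and then obtain $52\le m\le 102$ via supplements; however, the supplement of a $3$-$(21,7,15m')$ design has $\lambda=15(204-m')$, so to reach $m=52$ you would need $m'=152$, not some $m'\le 51$. In fact $M=102$ here (since $\lambda_{\max}/2=1530=15\cdot 102$), so the target range $\{2,\ldots,102\}$ is already the entire lower half and supplements buy you nothing. You must construct designs for every $m\in\{2,\ldots,102\}$ directly.

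The paper does this without any disjoint-union bookkeeping. It takes $G_1\cong A_6$ of order $360$ (acting on $21$ points with $12$ orbits on $3$-subsets and $406$ on $7$-subsets) to handle the hard case $m=2$ --- the $12\times 406$ system has $56$ solutions for $\lambda=30$ --- and then the overgroup $G_2\cong S_6$ of order $720$ (system size $11\times 253$) for every $m\in\{3,\ldots,102\}$, solving $A\cdot x=15m\cdot j$ separately at each value. Your candidate groups (subgroups of $P\Gamma L(3,4)$, Frobenius groups on $21$ points, $A_7$) are plausible guesses but not what the paper uses; the operative observation is that $A_6$ and $S_6$ have natural transitive actions on $21$ points with very few $3$-set orbits, which keeps each individual system small enough to solve outright and renders the chaining of disjoint building-block designs unnecessary.
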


\begin{proof}
Let $G_1\cong A_6$ be the group of order $360$ generated by the
permutations
\begin{equation}\label{cycle5}
(2,3,4,5,6)(7,8,9,10,11)(12,13,14,15,16)(17,18,19,20,21)
\end{equation}
and
\begin{equation*}
(1,5,2)(3,4,6)(9,18,16)(10,13,12)(11,14,19)(15,17,20).
\end{equation*}
There are $12$ orbits of $3$-element subsets and $406$ orbits of
$7$-element subsets of $\P=\{1,\ldots,21\}$. The Kramer-Mesner
system has $56$ solutions for $\lambda=30$, giving rise to $28$
non-isomorphic designs. All of them have $G_1$ as their full
automorphism group. Base blocks for one of the designs are the first
$10$ sets in Table~\ref{tab2}.

The group $G_2\cong S_6$ of order $720$ generated by the
permutations~\eqref{cycle5} and
\begin{equation*}
(1,4)(2,6)(3,5)(10,12)(11,14)(15,20)(16,18)
\end{equation*}
can be used for $m\ge 3$. The Kramer-Mesner system is of size
$11\times 253$. We checked that solutions exist for all
$\lambda=15m$, $m\in\{3,\dots,102\}$. Base blocks for $m=5$ are the
next $17$ sets in Table~\ref{tab2}. Base blocks for the other cases
are available on our web page.
\end{proof}

\begin{table}[th]
\begin{center} {\scriptsize \begin{tabular}{lll}
\hline
$\{1, 2, 3, 4,5,6,7\}$ & $\{1, 2, 3, 7, 8, 13, 17\}$ & $\{1, 2, 3, 7, 9, 11, 14\}$ \\
$\{1, 2, 3, 7, 9, 12, 21 \}$ & $\{1, 2, 7, 8, 10, 16, 21 \}$ & $\{1, 2, 7, 8, 11, 14, 20 \}$ \\
$\{1, 2, 7, 10, 16, 19, 20 \}$ &  $\{1, 7, 8, 12, 13, 18, 19 \}$ & $\{1, 7, 8, 15, 18, 19, 21 \}$ \\
$\{7, 8, 9, 10, 12, 15, 19 \}$ & & \\
\hline
$\{ 1, 2, 3, 4, 7, 8, 20 \}$ &  $\{ 1, 2, 3, 4, 7, 14, 15 \}$ &  $\{ 1, 2, 3, 7, 8, 10, 12 \}$ \\
$\{ 1, 2, 3, 9, 13, 15, 20 \}$ & $\{ 1, 2, 7, 8, 9, 10, 16 \}$ &  $\{ 1, 2, 7, 8, 9, 13, 19 \}$ \\
$\{ 1, 2, 7, 8, 9, 15, 18 \}$ &  $\{ 1, 2, 7, 8, 11, 14, 17 \}$ & $\{ 1, 2, 7, 9, 12, 17, 20 \}$ \\
$\{ 1, 2, 7, 10, 11, 16, 17 \}$ &  $\{ 1, 2, 7, 10, 11, 19, 20 \}$ &  $\{ 1, 2, 7, 10, 16, 19, 20 \}$ \\
$\{ 1, 7, 8, 9, 10, 12, 15 \}$ &  $\{ 1, 7, 8, 9, 17, 19, 21 \}$ &  $\{ 7, 8, 9, 10, 12, 13, 21 \}$ \\
$\{ 7, 8, 9, 10, 17, 18, 20 \}$ & $\{ 7, 8, 9, 13, 17, 19, 21 \}$ & \\
\hline
\end{tabular}}
\vskip 5mm \caption{Base blocks for a $3$-$(21,7,30)$ design with
$G_1$ and a $3$-$(21,7,75)$ design with $G_2$ as automorphism
group.}\label{tab2}
\end{center}
\end{table}

We did not find any $3$-$(21,7,15)$ designs, and the existence
problem is still open.

\section{Designs with parameters
$4$-$(15,5,\lambda)$}\label{sec4-15-5}

For $t=4$, $v=15$, and $k=5$, we have $\lambda_{\min}=1$,
$\lambda_{\max}=11$, and $M=5$. It is known that $4$-$(15,5,m)$
designs do not exist for $m=1$~\cite{MH72} and exist for
$m\in\{3,4,5\}$ \cite[Table~4.46]{KL07}. We settle the remaining
case $m=2$.

\begin{theorem}
Simple $4$-$(15,5,2)$ designs exist. 
\end{theorem}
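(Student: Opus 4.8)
The plan is to apply the Kramer-Mesner method, exactly as in the previous sections, with a judiciously chosen prescribed automorphism group $G$ acting on $\P=\{1,\ldots,15\}$. A simple $4$-$(15,5,2)$ design has $b=2\binom{15}{4}/\binom{5}{4}=546$ blocks, so the sizes of the $G$-orbits of blocks must sum to $546$; this already restricts the reasonable choices of $|G|$. First I would fix a candidate group. Natural groups acting on $15$ points include the Singer cycle $\Z_{15}$ of $PG(3,2)$ and its normalizer $\Z_{15}\rtimes\Z_4$ in $PGL(4,2)\cong A_8$, as well as various Frobenius and dihedral groups; since $\lambda=2$ is small, one expects that a group which is not too large will work. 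Using GAP one then computes the orbits $\T_1,\ldots,\T_m$ of $4$-element subsets and $\K_1,\ldots,\K_n$ of $5$-element subsets of $\P$, together with the Kramer-Mesner matrix $A=[a_{ij}]$; for $v=15$ this is a small and easy computation.

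The second step is to solve the system $A\cdot x=2j$ over $\{0,1\}$. Since $t=4\ge 3$, I would use A.~Wasserman's LLL-based solver. Any $0$-$1$ solution $x$ yields a simple $4$-$(15,5,2)$ design as the union of the corresponding $G$-orbits of $5$-element subsets. I would then check directly that every $4$-element subset is covered exactly twice, use \emph{nauty}/\emph{Traces} to compute $\Aut(\D)$ and to sort the solutions into isomorphism classes, and record a set of base blocks for at least one design.

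The main obstacle is locating a group $G$ for which the system is solvable: too large a group tends to over-constrain the system, so that no $0$-$1$ solution exists, whereas too small a group, while still giving a system of manageable size for $v=15$, may demand a longer search. I expect this to involve trying a short list of candidate groups before one succeeds; once that is done the remaining steps are routine. Finally, combined with the known realizability of $4$-$(15,5,m)$ for $m\in\{3,4,5\}$, the supplements of those and of the new design (covering $m\in\{6,7,8,9\}$), the trivial cases $m\in\{0,11\}$, and the non-existence for $m\in\{1,10\}$, the case $m=2$ completes the determination of all realizable $4$-$(15,5,\lambda)$.
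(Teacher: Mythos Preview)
Your plan is the same method the paper uses, but what you have written is a strategy, not a proof. For a purely constructive existence statement like this one, the entire content of the proof is the explicit group $G$ together with a verified $0$--$1$ solution of the Kramer--Mesner system (or equivalently a list of base blocks). You have not supplied either; you only speculate about candidate groups and say you ``expect'' one on a short list will work. That is the gap: nothing you have written certifies that any solution exists.

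Concretely, the paper does not use any of the groups you suggest ($\Z_{15}$, $\Z_{15}\rtimes\Z_4$, Frobenius or dihedral groups on $15$ points). It prescribes $G\cong\Z_3\times S_3$ of order $18$, generated by
\[
(4,5,6)(7,8,9)(10,11,12)(13,14,15)\quad\text{and}\quad (1,4)(2,5)(3,6)(8,13)(9,10)(11,15),
\]
obtains an $84\times 178$ Kramer--Mesner system with $12$ solutions for $\lambda=2$, and finds exactly two isomorphism classes, each with full automorphism group $G$; explicit base blocks are tabulated. To turn your outline into a proof you must name a group, exhibit the solution (base blocks), and verify it---or at least report that the system for a stated $G$ has solutions. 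Your concluding remark about completing the spectrum $m\in\{0,\ldots,11\}$ is fine and matches the paper's Section~\ref{sec4-15-5}.
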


\begin{proof}
Let $G\cong \Z_3\times S_3$ be the group of order $18$ generated by
the permutations
\begin{equation*}
(4,5,6)(7,8,9)(10,11,12)(13,14,15),
\end{equation*}
\begin{equation*}
(1,4)(2,5)(3,6)(8,13)(9,10)(11,15).
\end{equation*}
The Kramer-Mesner system is of size $84\times 178$ and has $12$
solutions for $\lambda=2$, giving rise to two non-isomorphic designs
with $\Aut(\D)=G$. Base blocks for one of them are given in
Table~\ref{tab3}.
\end{proof}

\begin{table}[ht]
\begin{center}{\scriptsize \begin{tabular}{llll} \hline
$\{ 1, 2, 3, 4, 5 \}$ & $\{ 1, 2, 3, 7, 11 \}$ & $\{ 1, 2, 4, 7, 11 \}$ & $\{ 1, 2, 4, 7, 14 \}$ \\
$\{ 1, 2, 4, 8, 10 \}$ & $\{ 1, 2, 4, 8, 11 \}$ & $\{ 1, 2, 4, 9, 12 \}$ & $\{ 1, 2, 4, 9, 15 \}$ \\
$\{ 1, 2, 4, 10, 13 \}$ & $\{ 1, 2, 4, 12, 14 \}$ & $\{ 1, 2, 4, 13, 15 \}$ & $\{ 1, 2, 7, 8, 9 \}$ \\
$\{ 1, 2, 7, 8, 14 \}$ & $\{ 1, 2, 7, 12, 15 \}$ & $\{ 1, 2, 10, 11, 12 \}$ & $\{ 1, 2, 10, 11, 15 \}$ \\
$\{ 1, 2, 13, 14, 15 \}$ & $\{ 1, 4, 7, 8, 12 \}$ & $\{ 1, 4, 7, 9, 11 \}$ & $\{ 1, 4, 8, 9, 14 \}$ \\
$\{ 1, 4, 8, 12, 13 \}$ & $\{ 1, 4, 8, 13, 14 \}$ & $\{ 1, 4, 9, 10, 11 \}$ & $\{ 1, 4, 11, 12, 15 \}$ \\
$\{ 1, 4, 11, 14, 15 \}$ & $\{ 1, 7, 8, 10, 11 \}$ & $\{ 1, 7, 8, 10, 13 \}$ & $\{ 1, 7, 8, 11, 15 \}$ \\
$\{ 1, 7, 8, 14, 15 \}$ & $\{ 1, 7, 10, 11, 14 \}$ & $\{ 1, 7, 10, 13, 15 \}$ & $\{ 1, 7, 11, 12, 14 \}$ \\
$\{ 1, 7, 12, 13, 14 \}$ & $\{ 1, 10, 11, 13, 15 \}$ & $\{ 7, 8, 9, 10, 13 \}$ & $\{ 7, 8, 11, 12, 13 \}$\\
\hline
\end{tabular}}
\vskip 5mm \caption{Base blocks for a $4$-$(15,5,2)$
design.}\label{tab3}
\end{center}
\end{table}

For $m\in\{3,5\}$ designs can be constructed from the same group
$G$, and for $m=4$ from the group $G_1\cong A_5$ of order $60$.

\section{Designs with parameters $4$-$(16,8,\lambda)$}

For $t=4$, $v=16$, and $k=8$, we have $\lambda_{\min}=15$,
$\lambda_{\max}={12\choose 4}=495$, and $M=16$. By
\cite[Table~4.46]{KL07}, $4$-$(16,8,15m)$ designs exist for
$m\in\{4,\ldots,16\}$. We found designs for $m=3$.

\begin{theorem}
Simple $4$-$(16,8,45)$ designs exist. 
\end{theorem}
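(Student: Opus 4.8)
The plan is to apply the Kramer-Mesner method with a carefully chosen prescribed automorphism group $G$ acting on $\P=\{1,\ldots,16\}$, exactly as in the preceding sections. First I would pick a candidate group of moderate order (large enough to cut the linear system down to a tractable size, small enough that $4$-$(16,8,45)$ designs can still appear among the $G$-invariant block sets). Natural candidates are subgroups of $\Aut$ of the Boolean space of dimension $4$, i.e.\ of $AGL(4,2)$, or of $\mathrm{P\Gamma L}(2,16)$ in its action on $16$ points; a group of order somewhere in the range $16$ to a few hundred would be my starting point. Once $G$ is fixed, I would use GAP to compute the orbits $\T_1,\ldots,\T_m$ of $4$-element subsets and $\K_1,\ldots,\K_n$ of $8$-element subsets of $\P$, and then assemble the Kramer-Mesner matrix $A=[a_{ij}]$, where $a_{ij}$ counts the blocks of $\K_j$ containing a fixed $4$-set of $\T_i$.

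Next I would feed the system $A\cdot x=45\, j$ into A.~Wasserman's LLL-based solver (the tool the paper already uses for $t\ge 3$), searching for $0$-$1$ solutions $x\in\{0,1\}^n$. Any solution yields the base blocks of a simple $4$-$(16,8,45)$ design whose block set is the union of the selected $G$-orbits. If the first group choice produces no solution, or produces a system that is too large, I would iterate over other subgroups — this trial-and-error over prescribed groups is the standard workflow. Having found at least one solution, I would run it through \emph{nauty}/\emph{Traces} to confirm it is genuinely a new design, to identify the full automorphism group $\Aut(\D)$, and to sort out isomorphism among multiple solutions. Finally I would record the base blocks (presumably in a table, as in the other sections, or on the web page) so the construction is fully reproducible.

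The main obstacle I anticipate is the same one implicit throughout the paper: choosing a group~$G$ that simultaneously makes the linear system small enough to solve in reasonable time and admits a $G$-invariant $4$-$(16,8,45)$ design at all. Since $45=3\lambda_{\min}$ is small relative to $\lambda_{\max}=495$, and since the corresponding number of blocks $b=\lambda_{\max}\binom{16}{4}/\binom{8}{4}\cdot\frac{45}{495}$ is a specific integer, the orbit structure must be compatible with splitting into orbits that sum correctly; a group with orbits of awkward sizes on $8$-subsets may simply have no solution. I would mitigate this by restricting, at least initially, to orbits on $8$-subsets of size at most $b$ (as done for $G_1$ in Theorem~\ref{tm1}), which both shrinks $n$ and is a natural constraint since each orbit either lies entirely in $\B$ or not. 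If no single group works, a secondary approach is to build the design from a union of smaller disjoint pieces, e.g.\ combining a $G$-invariant $4$-$(16,8,15)$ or $4$-$(16,8,30)$ design with a disjoint complementary piece, mirroring the union constructions used for the $2$-$(55,10,\lambda)$ and $3$-$(20,5,\lambda)$ families earlier.
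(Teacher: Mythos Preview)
Your proposal is correct and follows exactly the approach the paper takes: the Kramer--Mesner method with a prescribed automorphism group, the system solved by Wassermann's LLL-based solver, and isomorphism/full-group checks via \emph{nauty}/\emph{Traces}. The paper's concrete choice is $G\cong \Z_{15}.(\Z_4\times\Z_2)$ of order~$120$ (with one fixed point on $\P$), producing a $25\times 132$ Kramer--Mesner system that has four solutions for $\lambda=45$, each a non-isomorphic design with $\Aut(\D)=G$.
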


\begin{proof}
Let $G\cong \Z_{15}. (\Z_4\times \Z_2)$ be the group of order $120$ generated by the
permutations
\begin{equation*}
(2,3)(4,5,6,7)(8,9,10,11)(12,13,14,15),
\end{equation*}
\begin{equation*}
(1,5)(2,13)(3,11)(6,15)(7,8)(9,14)(10,12).
\end{equation*}
The Kramer-Mesner system is of size $25\times 132$ and has four
solutions for $\lambda=45$. They correspond to four non-isomorphic
designs with $\Aut(\D)=G$. Base blocks for one of them are given in
Table~\ref{tab4}.
\end{proof}

\begin{table}[ht]
\begin{center} {\scriptsize \begin{tabular}{lll} \hline
$\{ 1, 2, 3, 4, 5, 6, 8, 11 \}$ & $\{ 1, 2, 3, 4, 5, 6, 14, 16 \}$ & $\{ 1, 2, 3, 4, 5, 8, 9, 14 \}$ \\
$\{ 1, 2, 3, 4, 5, 8, 12, 13 \}$ & $\{ 1, 2, 3, 4, 5, 10, 12, 14 \}$ & $\{ 1, 2, 3, 4, 5, 11, 15, 16 \}$ \\
$\{ 1, 2, 3, 4, 6, 8, 10, 16 \}$ & $\{ 1, 2, 3, 4, 6, 13, 15, 16 \}$ & $\{ 1, 2, 4, 5, 6, 7, 9, 11 \}$ \\
$\{ 1, 2, 4, 5, 6, 7, 13, 15 \}$ & $\{ 1, 2, 4, 5, 6, 9, 11, 15 \}$ & $\{ 1, 2, 4, 5, 6, 9, 11, 16 \}$ \\
$\{ 1, 2, 4, 5, 6, 10, 13, 16 \}$ & $\{ 1, 2, 4, 5, 7, 8, 9, 16 \}$ & $\{ 1, 2, 4, 5, 8, 12, 15, 16 \}$ \\
$\{ 1, 2, 4, 6, 8, 9, 10, 15 \}$ & & \\
\hline
\end{tabular}}
\vskip 5mm \caption{Base blocks for a $4$-$(16,8,45)$
design.}\label{tab4}
\end{center}
\end{table}

The same group $G$ can be used to construct $4$-$(16,8,15m)$ designs
for $m\in\{4,\ldots,16\}$. We tried many groups for $m\in\{1,2\}$,
but did not find any designs.

\section{Designs with parameters $5$-$(16,7,\lambda)$}

For $t=5$, $v=16$, and $k=7$, we have $\lambda_{\min}=5$,
$\lambda_{\max}={11\choose 2}=55$, and $M=5$. By
\cite[Table~4.46]{KL07}, $5$-$(16,7,5m)$ designs exist for
$m\in\{3,4,5\}$. Here we settle the case $m=2$.

\begin{theorem}
Simple $5$-$(16,7,10)$ designs exist. 
\end{theorem}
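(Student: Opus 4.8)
The plan is to apply the Kramer-Mesner method, exactly as in the preceding sections. First I would look for a permutation group $G$ on $\P=\{1,\ldots,16\}$ whose prescription is likely to be compatible with such a design: natural candidates are subgroups of $A\Gamma L(1,16)$ and $A\Gamma L(4,2)$ acting on the $16$ points, together with various small solvable groups, chosen so that the resulting Kramer-Mesner system is small enough for A.~Wasserman's LLL-based solver~\cite{AW98} yet not so large that no $0$-$1$ solution survives. For a fixed $G$ I would use GAP~\cite{GAP4} to compute the orbits $\T_1,\ldots,\T_m$ of $5$-element subsets and $\K_1,\ldots,\K_n$ of $7$-element subsets of $\P$, and the Kramer-Mesner matrix $A=[a_{ij}]$.

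The second step is to solve the system $A\cdot x = 10\, j$ over $\{0,1\}$ with the solver of~\cite{AW98}, since here $t=5\ge 3$. A $0$-$1$ solution singles out a union of orbits $\K_j$ forming the block set of a simple $5$-$(16,7,10)$ design that admits $G$ as an automorphism group; such a design is then recorded by listing one base block per selected orbit, in the format used throughout the paper. Since $\lambda=10=2\lambda_{\min}$ already lies below $\lambda_{\max}/2=27.5$, there is no gain in passing to the supplementary parameters, so $\lambda=10$ is the direct target. Finally, I would run the resulting block set through \emph{nauty}/\emph{Traces}~\cite{MP14} to compute $\Aut(\D)$, to verify that the prescribed group is indeed realized, and to separate isomorphism classes among the solutions found.

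The step I expect to be the real obstacle is the very first one. There is no a priori guarantee that a $5$-$(16,7,10)$ design with a nontrivial automorphism group exists, and even when a candidate group yields a system of reasonable size the solver may simply return no solution. As with the other open cases discussed in this paper, one may have to try a substantial number of groups before the Kramer-Mesner system turns out to be solvable; striking the balance for $|G|$ — large enough that the system is within reach of the solver, small enough that it remains consistent — is the crux of the argument.
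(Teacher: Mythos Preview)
Your proposal correctly identifies the paper's method --- Kramer-Mesner with a prescribed automorphism group, GAP for orbits, Wassermann's solver for the $0$-$1$ system, and \emph{nauty}/\emph{Traces} for isomorphism --- but it stops short of being a proof. What you have written is a plan, not a construction: you describe what you \emph{would} do, acknowledge that finding a workable group is the crux, and leave that crux unresolved. An existence theorem of this type is established only by exhibiting an actual design (or a group together with a certified solution of the Kramer-Mesner system); the methodology alone proves nothing.

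The paper's proof supplies exactly the missing ingredient. It takes $G\cong (\Z_2\times\Z_2\times\Z_2\times\Z_2).A_4$ of order $192$, given by the explicit generators
\[
(2,3,4)(5,6,7,8,9,10)(11,12,13,14,15,16),\qquad
(1,5)(2,12)(3,15)(4,8)(6,14)(7,16)(9,10)(11,13),
\]
reports that the resulting Kramer-Mesner system is $28\times 71$, finds two solutions for $\lambda=10$ yielding a single isomorphism class with $\Aut(\D)=G$, and lists the base blocks. To turn your outline into a proof you must produce a specific group, verify that the system has a $0$-$1$ solution, and record the base blocks so the claim can be checked.
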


\begin{proof}
Let $G\cong (\Z_2\times\Z_2\times\Z_2\times\Z_2).A_4$ be the group
of order $192$ generated by the permutations
\begin{equation*}
(2,3,4)(5,6,7,8,9,10)(11,12,13,14,15,16),
\end{equation*}
\begin{equation*}
(1,5)(2,12)(3,15)(4,8)(6,14)(7,16)(9,10)(11,13).
\end{equation*}
The Kramer-Mesner system is of size $28\times 71$ and has two
solutions for $\lambda=10$. The two designs are isomorphic and have
$\Aut(\D)=G$. Base blocks are listed in Table~\ref{tab5}.
\end{proof}

\begin{table}[ht]
\begin{center} {\scriptsize \begin{tabular}{llll} \hline
$\{ 1, 2, 3, 4, 5, 6, 13 \}$ & $\{ 1, 2, 3, 4, 5, 6, 14 \}$ &  $\{ 1, 2, 3, 5, 6, 7, 11 \}$ & $\{ 1, 2, 3, 5, 6, 8, 9 \}$ \\
$\{ 1, 2, 3, 5, 6, 9, 10 \}$ &  $\{ 1, 2, 3, 5, 6, 9, 12 \}$ & $\{ 1, 2, 3, 5, 6, 10, 15 \}$ & $\{ 1, 2, 3, 5, 6, 14, 16 \}$ \\
$\{ 1, 2, 3, 5, 8, 11, 12 \}$ & $\{ 1, 2, 5, 6, 7, 8, 16 \}$ &  $\{ 1, 2, 5, 6, 7, 9, 14 \}$ &  $\{ 1, 2, 5, 6, 7, 12, 13 \}$ \\
$\{ 1, 2, 5, 6, 7, 14, 15 \}$ & & & \\
\hline
\end{tabular}}
\vskip 5mm \caption{Base blocks for a $5$-$(16,7,10)$
design.}\label{tab5}
\end{center}
\end{table}

The same group $G$ gives designs for $m=5$, and for $m\in\{3,4\}$ a
subgroup of index $2$ can be used. We did not find any designs for
$m=1$.

\section{Designs with parameters $5$-$(17,8,\lambda)$}\label{secreslast}

For $t=5$, $v=17$, and $k=8$, we have $\lambda_{\min}=20$,
$\lambda_{\max}={12\choose 3}=220$, and $M=5$. By
\cite[Table~4.46]{KL07}, $5$-$(17,8,20m)$ designs exist for
$m\in\{3,4,5\}$. Again, we can settle the $m=2$ case.

\begin{theorem}
Simple $5$-$(17,8,40)$ designs exist. 
\end{theorem}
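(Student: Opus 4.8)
The plan is to apply the Kramer-Mesner method exactly as in the preceding theorems. First I would select a prescribed automorphism group $G$ of moderate order acting on $\P=\{1,\ldots,17\}$; natural candidates are subgroups of $PGL(2,16)$ or $P\Gamma L(2,16)$ on the $17$ points of the projective line, or small groups built from a Singer-type cycle of length $17$ together with a multiplier, since such groups tend to produce Kramer-Mesner systems of manageable dimensions. Using GAP, I would compute the orbits $\T_1,\ldots,\T_m$ of $5$-element subsets and $\K_1,\ldots,\K_n$ of $8$-element subsets, and then the Kramer-Mesner matrix $A=[a_{ij}]$; for the range of group orders contemplated this is a routine computation, and one expects a system of size on the order of a few dozen rows by a hundred or so columns, comparable to the $28\times 71$ and $25\times 132$ systems appearing earlier.

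Next I would feed the system $A\cdot x = 40\, j$ to A.~Wasserman's LLL-based solver (the tool cited in the preliminaries for designs with $t\ge 3$) and search for $0$-$1$ solutions. If the first choice of $G$ yields no solution, I would iterate over a short list of alternative groups — enlarging or shrinking $G$, or changing its action — until solutions appear. Once a solution vector $x$ is found, the corresponding union of $G$-orbits of $8$-subsets is a simple $5$-$(17,8,40)$ design; I would then verify the design property directly and run nauty/Traces to determine $\Aut(\D)$ and to check whether distinct solutions give non-isomorphic designs. The base blocks of one such design would be exhibited in a table, in the same format as Tables~\ref{tab1}--\ref{tab5}.

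The main obstacle is the search itself: there is no guarantee a priori that any of the tried groups admits a solution for $\lambda=40$, and the solver's running time can range from seconds to days depending on the structure of $A$. The craft lies in choosing $G$ so that the system is neither so large that the LLL search becomes infeasible nor so constrained that no $0$-$1$ solution exists. Since the preceding sections report that the analogous $m=2$ cases for $4$-$(15,5,\lambda)$, $4$-$(16,8,\lambda)$, and $5$-$(16,7,\lambda)$ were all successfully handled with groups of order between $18$ and $192$, I would expect a group in that range to work here as well, and the remainder of the proof — verifying the design and computing its automorphism group — is then entirely mechanical. I would also note, as in the parallel sections, whether the same group or a related one extends the construction to $5$-$(17,8,20m)$ designs for other values of $m$.
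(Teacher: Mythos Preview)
Your proposal is correct and mirrors the paper's approach almost exactly: the paper takes $G\cong \Z_{17}.\Z_{16}$ of order $272$---precisely your ``Singer-type cycle of length $17$ together with a multiplier''---obtains a $25\times 95$ Kramer-Mesner system, and finds $61$ solutions for $\lambda=40$, all non-isomorphic with $\Aut(\D)=G$. Your expected group order range ``between $18$ and $192$'' slightly undershoots, but your alternative suggestion of the full multiplier group on $17$ points is exactly what works.
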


\begin{proof}
Let $G\cong \Z_{17}.\Z_{16}$ be the group of order $272$ generated
by the cycle $(1,2,\ldots,17)$ and the permutation
\begin{equation*}
(2,4,10,11,14,6,16,12,17,15,9,8,5,13,3,7).
\end{equation*}
The Kramer-Mesner system is of size $25\times 95$. It has $61$
solutions for $\lambda=40$, giving rise to $61$ non-isomorphic
designs with $\Aut(\D)=G$. Base blocks for one of the designs are
given in Table~\ref{tab6}.
\end{proof}

\begin{table}[!ht]
\begin{center}
\noindent {\scriptsize \begin{tabular}{llll} \hline
$\{ 1, 2, 3, 4, 5, 6, 7, 10 \}$ &  $\{ 1, 2, 3, 4, 5, 6, 8, 10 \}$ &  $\{ 1, 2, 3, 4, 5, 6, 8, 13 \}$ \\
$\{ 1, 2, 3, 4, 5, 6, 9, 12 \}$ & $\{ 1, 2, 3, 4, 5, 6, 10, 14 \}$ &  $\{ 1, 2, 3, 4, 5, 7, 8, 12 \}$ \\
$\{ 1, 2, 3, 4, 5, 7, 9, 14 \}$ & $\{ 1, 2, 3, 4, 5, 7, 10, 14 \}$ & $\{ 1, 2, 3, 4, 5, 7, 11, 13 \}$ \\
$\{ 1, 2, 3, 4, 5, 7, 14, 15 \}$ & $\{ 1, 2, 3, 4, 5, 9, 10, 12 \}$ &  $\{ 1, 2, 3, 4, 6, 7, 8, 9 \}$ \\
$\{ 1, 2, 3, 4, 6, 7, 9, 16 \}$ &  $\{ 1, 2, 3, 4, 6, 7, 12, 16 \}$ & $\{ 1, 2, 3, 4, 6, 7, 14, 16 \}$ \\
$\{ 1, 2, 3, 4, 6, 8, 14, 16 \}$ & $\{ 1, 2, 3, 4, 6, 9, 11, 15 \}$ & $\{ 1, 2, 3, 4, 7, 8, 9, 12 \}$ \\
\hline
\end{tabular}}
\vskip 5mm \caption{Base blocks for a $5$-$(17,8,40)$
design.}\label{tab6}
\end{center}
\end{table}

The same group can be used for $m\in\{3,4,5\}$. The existence of
$5$-$(17,8,20)$ designs remains open.

\vskip 5mm


\begin{thebibliography}{20}

\bibitem{AH10}
M.~Araya and M.~Harada, \emph{Mutually disjoint Steiner systems
$S(5,8,24)$ and $5$-$(24,12,48)$ designs}, Electron.\ J.\ Combin.\
\textbf{17} (2010), no.\ 1, Note 1, 6 pp.

\bibitem{AHTW10}
M.~Araya, M.~Harada, V.~D.~Tonchev, and A.~Wassermann,
\emph{Mutually disjoint designs and new $5$-designs derived from
groups and codes}, J.\ Combin.\ Des.\ \textbf{18} (2010), no.\ 4,
305--317.


\bibitem{BLW99}
A.~Betten, R.~Laue, and A.~Wassermann, \emph{Simple $7$-designs
with small parameters}, J.\ Combin.\ Des.\ \textbf{7} (1999), no.\
2, 79--94.

\bibitem{CD07}
C.~J.~Colbourn and J.~H.~Dinitz (eds.), \emph{Handbook of
Combinatorial Designs, Second Edition}, Chapman \& Hall/CRC, Boca
Raton, 2007.

\bibitem{CS17}
D.~Crnkovi\'{c} and A.~\v{S}vob, \emph{Transitive $t$-designs constructed
from linear groups $L(2,q)$, $q\le 23$}, preprint.

\bibitem{GAP4}
The GAP~Group, \emph{GAP -- Groups, Algorithms, and Programming,
Version 4.8.7}, 2017, \url{http://www.gap-system.org}.

\bibitem{KL07}
G.~B.~Khosrovshahi and R.~Laue, \emph{$t$-designs with $t\ge 3$},
in: \emph{The Handbook of Combinatorial Designs, Second Edition}
(eds.\ C.~J.~Colbourn and J.~H.~Dinitz), Chapman \& Hall/CRC, Boca
Raton, 2007, pp.\ 79--101.

\bibitem{KM76}
E.~S.~Kramer and D.~M.~Mesner, \emph{$t$-designs on hypergraphs},
Discrete Math.\ \textbf{15} (1976), no.\ 3, 263--296.

\bibitem{KNP11}
V.~Kr\v{c}adinac, A.~Naki\'{c}, and M.~O.~Pav\v{c}evi\'{c}, \emph{The Kramer-Mesner
method with tactical decompositions: some new unitals on $65$
points}, J.\ Combin.\ Des.\ \textbf{19} (2011), no.\ 4, 290--303.

\bibitem{KV16}
V.~Kr\v{c}adinac and R.~Vlahovi\'{c}, \emph{New quasi-symmetric designs by
the Kramer-Mesner method}, Discrete Math.\ \textbf{339} (2016), no.\
12, 2884--2890.

\bibitem{KR90}
D.~L.~Kreher and S.~P.~Radziszowski, \emph{Constructing
$6$-$(14,7,4)$ designs}, Finite geometries and combinatorial designs
(Lincoln, NE, 1987), 137–-151, Contemp.\ Math.\ \textbf{111}, Amer.\
Math.\ Soc., Providence, RI, 1990.

\bibitem{MR07}
R.~Mathon and A.~Rosa, \emph{$2$-$(v,k,\lambda)$ designs of small
order}, in: \emph{The Handbook of Combinatorial Designs, Second
Edition} (eds.\ C.~J.~Colbourn and J.~H.~Dinitz), Chapman \&
Hall/CRC, Boca Raton, 2007, pp.\ 25--58.

\bibitem{MP14}
B.~D.~McKay and A.~Piperno, \emph{Practical graph isomorphism, II},
J.\ Symbolic Comput.\ \textbf{60} (2014), 94–-112.

\bibitem{MH72}
N.~S.~Mendelsohn and S.~H.~Y.~Hung, \emph{On the Steiner systems
$S(3,4,14)$ and $S(4,5,15)$}, Utilitas Math.\ \textbf{1} (1972),
5--95.

\bibitem{OP08}
P.~R.~J.~\"{O}sterg\aa{}rd and O.~Pottonen, \emph{There exists no
Steiner system $S(4,5,17)$}, J.\ Combin.\ Theory Ser.\ A
\textbf{115} (2008), no.\ 8, 1570--1573.

\bibitem{SST14}
D.~R.~Stinson, C.~M.~Swanson, and T.~van~Trung, \emph{A new look at
an old construction: constructing (simple) $3$-designs from
resolvable $2$-designs}, Discrete Math.\ \textbf{325} (2014),
23--31.

\bibitem{TT17}
T.~van~Trung, \emph{Simple $t$-designs: a recursive construction for
arbitrary $t$}, Des.\ Codes Cryptogr.\ \textbf{83} (2017),  no.\ 3,
493--502.

\bibitem{AW98}
A.~Wassermann, \emph{Finding simple $t$-designs with enumeration
techniques}, J.\ Combin.\ Des.\ \textbf{6} (1998), no.\ 2, 79--90.

\end{thebibliography}
\end{document}